\newtheorem{theorem}{Theorem}
\newtheorem{lemma}[theorem]{Lemma}
\newtheorem{conjecture}[theorem]{Conjecture}
\newcommand{\D}[2]{ D_{#1}(#2)}
\def\d{\;{\rm d}}
\def\D{\mathcal{D}}
\def\R{\mathbb{R}}
\def\1{\mathbbm{1}}
\def\P{\mathbb{P}}
\def\epsilon{\varepsilon}
\def\build#1_#2^#3{\mathrel{\mathop{\kern 0pt#1}\limits_{#2}^{#3}}}
\title[Planar Brownian motion winds evenly along its trajectory]{Planar Brownian motion winds evenly \\along its trajectory}
\author{Isao Sauzedde}
\address{Isao Sauzedde -- LPSM, Sorbonne Universit\'e, Paris}
\email{isao.sauzedde@lpsm.paris}
\keywords{}
\subjclass[2020]{60J65, 60J55, 60F15}
\begin{document}

\begin{abstract}
Let $\mathcal{D}_N$ be the set of points around which a planar Brownian motion winds at least~$N$ times. We prove that the random measure on the plane with density $2 \pi N\mathbbm{1}_{\D_N}$ with respect to the Lebesgue measure
converges almost surely weakly, as $N$ tends to infinity, towards the occupation measure of the Brownian motion.
\end{abstract}
\maketitle
\section{Introduction}
Let $X:[0,1]\to \mathbb{R}^2$ be a planar Brownian motion started from $0$.
Let $\bar{X}$ be the oriented loop obtained by concatenating $X$ with the straight line segment joining $X_1$ to $X_0$.

For each point $z$ in $\R^2$ outside the range of $\bar{X}$, let $\theta(z)$ be the number of times $\bar{X}$ winds around~$z$. For $z$ on the range of $\bar X$, we set $\theta(z)=0$. Define
\[\D_N=\{z\in \R^2 : \theta(z)\geq N\}.\]
The Lebesgue measure $|\D_N|$ of this set is known to be of the order of $\frac{1}{2\pi N}$. More precisely, Werner proved in \cite{werner} that the following convergence holds:
\begin{equation}
\label{eq:werner}
2\pi N |\D_N| \overset{L^2}{\underset{N\to \infty}\longrightarrow} 1.
\end{equation}
For all $N\geq 1$, we denote by $\mu_{N}$ the random measure on the plane with density $2 \pi N\mathbbm{1}_{\D_N}$ with respect to the Lebesgue measure:
\[\d\mu_N(z)=2 \pi N\mathbbm{1}_{\D_N}(z)\d z.\]

Let $\nu$ be the occupation measure of $X$, defined as the push-forward of the Lebesgue measure on $[0,1]$ by $X$. In other words, $\nu$ is the random Borel probability measure  on the plane characterised by the fact that for every continuous test function $f:\R^2\to \R$,
\[\int_{\R^2} f \d\nu= \int_0^1 f(X_t)\d t.\]

The main result of this paper is the following.


\begin{theorem}
\label{th:main}
Almost surely, $\mu_N \underset{N\to \infty}\Longrightarrow \nu$.
\end{theorem}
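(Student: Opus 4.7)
Our approach combines Werner's asymptotic \eqref{eq:werner} with a time decomposition of the Brownian path. The measures $\mu_N$ are almost surely supported in the convex hull of the (compact) range of $\bar X$, and $\mu_N(\R^2) = 2\pi N|\D_N|\to 1 = \nu(\R^2)$ in $L^2$ by \eqref{eq:werner}, so the family $(\mu_N)_N$ is tight almost surely. Weak convergence therefore reduces to proving that, for every $f$ in a countable dense family of smooth compactly supported test functions,
\[ \int f\,d\mu_N \xrightarrow[N\to\infty]{\text{a.s.}} \int_0^1 f(X_t)\,dt. \]

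To attack this, fix $K\geq 1$ and split $[0,1]$ into $K$ equal subintervals $I_i$. Closing each piece $X|_{I_i}$ into a loop $\bar X^{(i)}$ with a chord, let $\theta^{(i)}$ be its winding function and $\D_N^{(i)}=\{z:\theta^{(i)}(z)\geq N\}$. An elementary computation comparing the concatenation of the subloops with $\bar X$ gives the decomposition
\[ \theta(z) = \sum_{i=1}^K \theta^{(i)}(z) - \theta^{P_K}(z), \]
where $P_K$ is the polygon through $X_0, X_{1/K},\ldots,X_1$ and $\theta^{P_K}$ is bounded uniformly in $z$ for fixed $K$. By Brownian scaling, each $\bar X^{(i)}$ has the law of a standard Brownian loop based at $X_{(i-1)/K}$, rescaled by the factor $1/\sqrt K$, so Werner's theorem applied after rescaling yields $2\pi N|\D_N^{(i)}|\to 1/K$ in $L^2$; moreover $\D_N^{(i)}$ is concentrated in a ball of diameter of order $K^{-1/2}$ around $X_{(i-1)/K}$. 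For continuous $f$ this leads to
\[ 2\pi N\int f\,\1_{\D_N^{(i)}}\,dz \approx \frac{f(X_{(i-1)/K})}{K}, \]
up to a modulus-of-continuity correction, and summing over $i$ produces the Riemann sum $\sum_{i=1}^K K^{-1} f(X_{(i-1)/K})$, which converges to $\nu(f)$ as $K\to\infty$.

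The main obstacle lies in comparing $\D_N$ with $\bigsqcup_i \D_N^{(i)}$ so that the symmetric difference has Lebesgue measure vanishing faster than $1/N$. Two contributions must be controlled: points $z$ with $\theta(z)\geq N$ but $\theta^{(i)}(z)<N$ for every $i$---which require several subloops to wind cooperatively in the same rotational direction around $z$, an unlikely configuration as $N\to\infty$---and the overlaps $\D_N^{(i)}\cap \D_N^{(j)}$ for $i\neq j$, which are localized near the relatively rare pairs of close points in the partition $(X_{i/K})_i$. Both must be bounded quantitatively so that $K=K(N)\to\infty$ can be chosen slowly enough for Werner's asymptotic to apply on each subloop, yet fast enough for the Riemann sum to approach $\nu(f)$; and the bounds must be strong enough to upgrade convergence in probability to almost-sure convergence via Borel–Cantelli along a dyadic subsequence, with the monotonicity of $N\mapsto\D_N$ filling in the intermediate indices. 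This quantitative comparison between the global and local high-winding sets is where the bulk of the technical work will lie.
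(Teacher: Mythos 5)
Your overall architecture is the same as the paper's: subdivide $[0,1]$ into subintervals, close each piece into a loop, use scaling plus Werner's asymptotic on each subloop, form a Riemann sum with a modulus-of-continuity error, and control the discrepancy between $\D_N$ and the union of the $\D_N^{(i)}$. But the proposal stops exactly where the proof actually lives. The comparison you defer to the end --- bounding the points where the subloops ``wind cooperatively'' and the overlaps $\D_N^{(i)}\cap\D_N^{(j)}$ --- is not a routine verification: it requires a deterministic sandwich of the form of Lemma \ref{le:decompo} (note the shift of the winding threshold by $T+M(T-1)$ and the auxiliary level $M$, which your additive identity $\theta=\sum_i\theta^{(i)}-\theta^{P_K}$ does not by itself produce), together with a quantitative moment bound on $\sum_{i<j}|\D^{i,j}_M|$ such as Lemma \ref{lemma:boundedSquare}. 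Without these estimates, nothing in your argument shows that the symmetric difference has measure $o(1/N)$ when $K=K(N)\to\infty$, so the proposal as written does not yet prove convergence even in probability for a fixed test function.

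There is a second, independent gap in the almost-sure upgrade. Werner's statement \eqref{eq:werner} is an $L^2$ convergence with no rate, so Chebyshev gives deviation probabilities that are merely $o(1)$; after a union bound over the $K(N)\to\infty$ subloops you cannot conclude summability (or anything at all) along a subsequence, which is why the paper needs the polynomial concentration of Lemma \ref{lemma:psmax1} with arbitrarily large $p$. Moreover, your interpolation scheme fails as stated: along a dyadic subsequence $N_k=2^k$, monotonicity of $N\mapsto f(\D_N)$ only traps $2\pi N f(\D_N)$ between $\tfrac12$ and $2$ times the subsequential limit, so it does not ``fill in the intermediate indices.'' One needs a subsequence whose consecutive ratios tend to $1$ (the paper uses $\mathbb{N}^\gamma=\{\lfloor K^\gamma\rfloor\}$), and then the Borel--Cantelli series runs over a much denser set of indices, which again forces polynomially decaying deviation bounds rather than the bare $L^2$ statement. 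In short, the strategy is the right one, but the three quantitative inputs (the sandwich lemma, the concentration with a rate, and the second-moment bound on overlaps) are precisely the missing content, and the dyadic-plus-monotonicity step must be repaired by taking a polynomially spaced subsequence.
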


To be clear, we mean that almost surely, for all bounded continuous function $f:\R^2\to \R$, the following convergence holds:
\[\lim_{N\to \infty} 2\pi N \int_{\R^2} f(z) \mathbbm{1}_{[N,+\infty)}(\theta(z))\d z =\int_{0}^{1} f(X_u) \d u.
\]
The assumption that the test function is bounded is not essential, because almost surely, the supports of the measures $\mu_N$, $N\geq 1$ and $\nu$ are contained in the convex hull of the range of $X$, which is compact.

In the course of the proof, we will obtain an estimation of the rate of convergence in terms of the modulus of continuity of the test function $f$ (see Lemma \ref{le:main}).
\medskip

The study of the windings of the planar Brownian motion has a long history. The first investigations were mostly concerned with the winding around a fixed point, the most prominent example being the celebrated Spitzer theorem \cite{spitzer}. There followed among other works a computation by Yor of the exact law of the winding \cite{mansuyYor,yor}, as well as many fine asymptotic results concerning related functionals (see for example \cite{zhan} and references therein).


In \cite{werner,werner2}, Werner shifted the attention from the winding around a point to the winding as a \emph{function}, as well as to the set of points with a given winding number. He established, for instance, in \cite{werner}, the convergence \eqref{eq:werner}. His results suggest in particular that when $N$ is large, the set $\D_N$, which is located near the trajectory $X$, has a very balanced distribution along this trajectory. Our main result gives a rigorous statement of this idea.

Our proof uses some results that we obtained in our previous work \cite{LAWA} on this subject, and which we recall briefly in the next section for the convenience of the reader.

\section{Prior results}

The Brownian motion $X$ is defined under a probability that we denote by $\P$.

Let $T$ be a positive integer. For all $i\in \{1,\ldots,T\}$, let $X^i$ be the restriction of $X$ to the interval $[\tfrac{i-1}{T}, \frac{i}{T}]$. As we did for $X$, let us denote by $\bar{X}^i$ the concatenation of $X^i$ with a straight line segment from $X_{\frac{i}{T}}$ to $X_{\frac{i-1}{T}}$, and by $\theta^i$ the winding function of the loop $\bar{X}^i$, taken to be $0$ on the trajectory. We then set, for all $N\geq 1$,
\[\D^i_N =\{z \in \R^2 : \theta^{i}(z)\geq N\} \ \text{ and }\
\D^{i,j}_N =
\{z\in \R^2 :
|\theta^i(z)|\geq N, |\theta^j(z)|\geq N\},
\]
with absolute values intended in the second definition.

Our proof of Theorem \ref{th:main} relies on the following lemmas, which are mild reformulations of results that we proved in \cite{LAWA} (see Equation (28), Theorem 1.5 and Lemma 2.4 there).
\begin{lemma}
\label{le:decompo}
Let $\mu$ be a Borel measure on $\R^2$, absolutely continuous with respect to the Lebesgue measure.
For all positive integers $N,T,M$ such that $T(M+1)<N$,
\[
\sum_{i=1}^T \mu\big(\D^i_{N+T+M(T-1)}\big)  - \hspace{-0.2cm}\sum_{1\leq i<j\leq T} \hspace{-0.2cm} \mu\big(\D^{i,j}_M\big)
\leq \mu(\D_N)
\leq
\sum_{i=1}^T \mu\big(\D^i_{N-T-M(T-1)}\big)  + \hspace{-0.2cm}\sum_{1\leq i<j\leq T} \hspace{-0.2cm}\mu\big(\D^{i,j}_M\big).
\]
\end{lemma}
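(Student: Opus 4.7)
The plan is to relate $\theta$ to the pieces $\theta^i$ through an auxiliary chord polygon, convert the resulting pointwise relation into an inequality between indicator functions, and then integrate against $\mu$. Viewing loops as $1$-cycles, $\bar X = X + \Gamma$ with $\Gamma = [X_1,X_0]$, and $\bar X^i = X^i + \Gamma_i$ with $\Gamma_i = [X_{i/T}, X_{(i-1)/T}]$. Since the $X^i$ concatenate to $X$, the difference $\bar X - \sum_i \bar X^i$ equals the closed polygon $P = \Gamma - \sum_i \Gamma_i$ whose consecutive vertices are $X_0, X_{1/T}, \ldots, X_{(T-1)/T}, X_1$, closed by $\Gamma$. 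Writing $\theta_P$ for its winding function, this yields the pointwise identity $\theta(z) = \sum_{i=1}^T \theta^i(z) + \theta_P(z)$ for every $z$ outside the (Lebesgue-null) union of ranges. Since $P$ has $T+1$ edges, $|\theta_P(z)| \leq T$, using that a piecewise-linear loop with $k$ edges winds at most $k/2$ times around any external point.

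The next step is a case analysis on the integer $K(z) := \#\{i : |\theta^i(z)| \geq M\}$. If $K(z) = 0$, then $|\theta(z)| \leq T(M-1) + T = TM < N$ by the hypothesis $T(M+1) < N$, so $z \notin \D_N$ and $z$ belongs to none of the sets $\D^i_{N \pm T \pm M(T-1)}$. If $K(z) = 1$, say with unique large index $i_0$, then $|\theta(z) - \theta^{i_0}(z)| \leq (T-1)(M-1) + T \leq T + M(T-1)$, which converts the relation $\theta(z) \geq N$ into $\theta^{i_0}(z) \geq N - T - M(T-1)$ and conversely turns $\theta^{i_0}(z) \geq N + T + M(T-1)$ into $\theta(z) \geq N$. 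If $K(z) \geq 2$, then $z$ lies in $\binom{K(z)}{2}$ of the sets $\D^{i,j}_M$, which absorbs the possible over-count on the left of the target inequalities. Combining the three cases gives the pointwise bound
\[
\sum_{i=1}^T \1_{\D^i_{N+T+M(T-1)}}(z) - \sum_{1\leq i<j\leq T}\1_{\D^{i,j}_M}(z) \leq \1_{\D_N}(z) \leq \sum_{i=1}^T \1_{\D^i_{N-T-M(T-1)}}(z) + \sum_{1\leq i<j\leq T}\1_{\D^{i,j}_M}(z),
\]
valid for $z$ outside a Lebesgue-null set. Since $\mu$ is absolutely continuous with respect to Lebesgue measure, integrating against $\mu$ yields both inequalities of the lemma.

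I expect the subtlest step to be the lower inequality in the pointwise bound when two pieces both wind many times around $z$. Specifically, in the critical case $K(z) = 2$ with both large indices $i_1 < i_2$ already satisfying $\theta^{i_j}(z) \geq N + T + M(T-1)$, the left-hand side equals $2$ while the single intersection term on the right contributes only $1$, so the inequality forces $\1_{\D_N}(z) = 1$. This is indeed the case, because $\theta(z) \geq \theta^{i_1}(z) + \theta^{i_2}(z) - (T-2)(M-1) - T \geq N$ under the hypothesis $T(M+1) < N$; the analogous bookkeeping for larger $K(z)$ then follows by the same combinatorial estimate $\binom{K}{2} \geq K-1$.
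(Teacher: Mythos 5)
Your proof is correct: the cycle decomposition $\theta=\sum_i\theta^i+\theta_P$, the bound $|\theta_P|\leq T$ on the chord polygon, and the case analysis on $K(z)$ (with the $K=2$, both-large subcase handled explicitly) do yield the pointwise indicator inequality, which integrates against any absolutely continuous $\mu$ to give the lemma. The paper itself gives no proof here, deferring to \cite{LAWA}, and your argument is essentially the same additivity-of-winding-numbers approach used there, so there is nothing further to flag.
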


\begin{lemma}
\label{lemma:psmax1}
For all $\delta< \frac{1}{2}$ and $p>0$, there exists 
$C>0$ such that for all $N\geq 1$ and all $R>0$,
\[
\mathbb{P}\Big( N^\delta \big| 2\pi N|\D_N|-1 \big| \geq R
\Big)\leq C R^{-p}.
\]
\end{lemma}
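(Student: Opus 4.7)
The tail bound is equivalent, by Markov's inequality, to the moment bound $\mathbb{E}[|2\pi N|\D_N|-1|^p] \leq C_{p,\delta} N^{-p\delta}$ for every $p \geq 1$ and every $\delta < 1/2$. I would prove this by bootstrapping from Werner's $L^2$ convergence \eqref{eq:werner}, using Lemma \ref{le:decompo} (with $\mu$ equal to Lebesgue measure) as the induction step. Two structural facts drive the argument: independence of Brownian increments combined with translation-invariance of Lebesgue measure imply that the random variables $|\D^i_N|$, $i = 1, \ldots, T$ are independent, and Brownian scaling gives $|\D^i_N| \overset{d}{=} T^{-1}|\D_N|$ (where the right-hand side refers to an independent Brownian motion on $[0,1]$). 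Writing $N' = N - T - M(T-1)$, Lemma \ref{le:decompo} then expresses $2\pi N|\D_N|$ as a sum of $T$ i.i.d.\ copies of roughly $T^{-1} \cdot 2\pi N'|\D_{N'}|$ plus a correction controlled by the cross term $2\pi N \sum_{i<j} |\D^{i,j}_M|$.

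Setting $v(N) := \Var(2\pi N|\D_N|)$, fixing $T \geq 2$ and choosing $M = N^a$ for a small $a \in (0,1)$, independence of the main summands gives
\[v(N) \leq \frac{1 + o(1)}{T} v(N') + (\text{cross-term variance}).\]
If the cross-term contribution decays fast enough, iteration from Werner's bound $v(N) \to 0$ drives $v(N)$ down to any polynomial rate strictly faster than $N^{-1}$, i.e., $v(N) = O(N^{-2\delta})$ for every $\delta < 1/2$.

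For higher moments, I would apply Rosenthal's inequality to the i.i.d.\ sum in the main term, yielding
\[m_p(N) \leq C_p T^{-p/2}(1+o(1)) v(N')^{p/2} + C_p T^{1-p}(1+o(1)) m_p(N') + (\text{cross-term } L^p),\]
where $m_p(N) := \mathbb{E}[|2\pi N|\D_N|-1|^p]$. The first term is handled by the variance bound just obtained, and taking $T$ large enough that $C_p T^{1-p} < 1/2$ lets the induction on $p$ close via iteration of the resulting recursion, yielding $m_p(N) = O(N^{-p\delta})$ for every $p$ and every $\delta < 1/2$.

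The genuine difficulty is to bound the cross terms $|\D^{i,j}_M|$ in all $L^p$ norms with enough decay in $M$ to absorb the $N$-scaling. A point in $\D^{i,j}_M$ must be tightly wound around by two disjoint, hence essentially independent, Brownian pieces, and one expects $\mathbb{E}[|\D^{i,j}_M|^p]$ to decay faster than any power of $M$. The natural route is to condition on one of the two loops and apply Werner-type estimates on the other; this is precisely the content of Lemma 2.4 of \cite{LAWA}. Once that input is available, the choice $M = N^a$ keeps the cross-term contributions negligible at every step of the recursion, and the bootstrap runs to completion.
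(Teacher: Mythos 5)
Note first that the paper does not prove this lemma at all: it is imported verbatim from the author's earlier work \cite{LAWA} (it is Theorem 1.5 there), so your proposal is really an attempt to reprove that theorem from scratch, and it has to be judged on its own. Judged that way, there is a genuine gap at its core: your bootstrap can only shrink \emph{fluctuations}, never the \emph{bias} $b(N):=\mathbb{E}[2\pi N|\D_N|]-1$. Averaging $T$ i.i.d.\ pieces divides a variance by $T$, but the bias of each piece is untouched; and if you take expectations in Lemma \ref{le:decompo} you only relate $b(N)$ to $b(N\pm(T+M(T-1)))$ with no contraction whatsoever. Since Werner's convergence \eqref{eq:werner} comes with no rate, your recursion cannot manufacture the rate $N^{-\delta}$ for the mean, and the mean enters unavoidably: Lemma \ref{le:decompo} is a two-sided \emph{inequality}, and variance does not pass through inequalities, so the honest recursion is for the $L^2$ (or $L^p$) distance to the constant $1$, namely $m_2(N)\lesssim \tfrac{1}{T}v(N') + b(N')^2 + \big(\tfrac{N-N'}{N'}\big)^2 + \|2\pi N\sum_{i<j}|\D^{i,j}_M|\|_2^2$, in which $b(N')^2$ appears undivided by $T$. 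What is missing is a separate quantitative estimate $|\,\mathbb{E}[2\pi N|\D_N|]-1\,|=O(N^{-\delta})$, which must be obtained analytically, e.g.\ by writing $\mathbb{E}[|\D_N|]=\int_{\R^2}\P(\theta(z)\geq N)\,\mathrm{d}z$ and using the exact law of the winding number around a fixed point (Yor's formula); this expectation asymptotic is precisely one of the main inputs of \cite{LAWA}, and without it the scheme does not close. (Once the mean estimate is in hand, your decomposition--scaling--independence skeleton is sound and is indeed close in spirit to how \cite{LAWA} proceeds.)

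A secondary but real issue is the cross term. To get the tail bound for \emph{every} $p$ you need arbitrarily high moments (or direct tail estimates) of $N\sum_{1\leq i<j\leq T}|\D^{i,j}_M|$ at scale $N^{-\delta}$; Lemma \ref{lemma:boundedSquare}, i.e.\ Lemma 2.4 of \cite{LAWA}, is only a second-moment bound, so your appeal to it as ``precisely the content'' of what you need is not accurate. You would have to prove an $L^p$ analogue, for instance via $|\D^{i,j}_M|\leq \min(|\D^i_M|,|\D^j_M|)$ together with independence and high moments of the single-piece areas, or via the conditioning argument you sketch, but that work is not done in the proposal. Similarly, the Rosenthal induction needs uniform-in-$N$ bounds on $m_p(N)$ to start, which again is asserted rather than established.
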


\begin{lemma}
\label{lemma:boundedSquare}
For all $\epsilon>0$, there exists $C>0$ such that for all positive integers $T,M$,
\[
\mathbb{E}\Big[ \Big(\hspace{-0cm}\sum_{1\leq i<j\leq T} \hspace{-0cm} |\D^{i,j}_M| \Big)^2 \Big]\leq C M^{-4+\epsilon} T^{1+\epsilon}.
\]
\end{lemma}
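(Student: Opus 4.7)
The plan is to expand the second moment as a sum over quadruples,
\[
\mathbb{E}\Big[\Big(\sum_{1\le i<j\le T}|\D^{i,j}_M|\Big)^2\Big]
\;=\;\sum_{\substack{1\le i<j\le T\\ 1\le k<l\le T}}\mathbb{E}\big[|\D^{i,j}_M|\,|\D^{k,l}_M|\big],
\]
and to control each term by conditioning on the discrete skeleton $Y_k=X_{k/T}$, $0\le k\le T$. Under this conditioning, the pieces $X^1,\ldots,X^T$ are independent Brownian bridges of duration $1/T$, so conditional expectations factorise as soon as $\{i,j\}$ and $\{k,l\}$ are disjoint. Brownian scaling gives $|\D^i_M|\stackrel{d}{=}T^{-1}|\D_M|$, and Lemma \ref{lemma:psmax1}---which transfers with only a minor modification from loops to bridges---provides moment bounds $\mathbb{E}[|\D^i_M|^p]\le C_p(TM)^{-p}$ for every $p\ge 1$.

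The key step is a joint estimate of the form
\[
\mathbb{E}\big[|\D^{i,j}_M|^p\,\big|\,(Y_k)_{k}\big]\;\le\;C_p\,(TM^2)^{-p}\,\psi\bigl(\sqrt{T}\,(Y_{i-1}-Y_{j-1})\bigr)
\]
for $p\in\{1,2\}$, where $\psi:\R^2\to[0,\infty)$ is a rapidly decreasing function. The additional factor $M^{-p}$ relative to the single-piece bound comes from the rarity of two \emph{conditionally independent} bridges simultaneously winding $M$ times around the same point, which I would quantify by a pointwise estimate $\mathbb{P}(|\theta^i(z)|\ge M\mid Y_{i-1},Y_i)\le C\,\varphi(\sqrt T(z-Y_{i-1}))/M$ for a single Brownian bridge and then integrate in $z$. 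Averaging over the skeleton turns $\psi(\sqrt T(Y_{i-1}-Y_{j-1}))$ into a Gaussian expectation of order $1/(1+|j-i|)$, and summing over quadruples---using Cauchy--Schwarz for index configurations that share one or two indices---yields the claimed $T^{1+\epsilon}M^{-4+\epsilon}$ bound after a logarithmic loss that is absorbed into the $\epsilon$.

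The main obstacle is precisely the pointwise winding estimate for a Brownian bridge. Lemma \ref{lemma:psmax1} only furnishes an integrated control of the area $|\D_M|$, whereas to extract the crucial $M^{-2p}$ decay in the joint moment one needs a pointwise bound of $\mathbb{P}(|\theta(z)|\ge M)$ that depends quantitatively on the position of $z$ relative to the bridge and which, after integration, reproduces the familiar $M^{-1}$ area asymptotics. Producing such a refined estimate---and verifying that the resulting $\varphi$ is integrable enough to close the Gaussian averaging over the skeleton---is the delicate technical heart of the argument; the rest is careful bookkeeping of the combinatorial sum over pairs of pairs.
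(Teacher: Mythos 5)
First, a point of reference: the paper does not prove this lemma at all --- it is imported verbatim from the author's earlier work \cite{LAWA} (Lemma 2.4 there), so there is no in-paper argument to compare against. Your strategy (expand into a sum over pairs of pairs, condition on the skeleton $(X_{k/T})_k$ to gain conditional independence of the pieces, use a spatially localised pointwise tail bound for the winding of one piece, then average over the Gaussian skeleton to extract a $1/(j-i)$ decay) is the natural route and is in the spirit of how such second-moment bounds are obtained in \cite{LAWA}. The scaling identity $|\D^i_M|\overset{d}{=}T^{-1}|\D_M|$ and the $1/(j-i)$ Gaussian averaging are correct, and the bookkeeping over quadruples would indeed deliver the stated (non-sharp) bound if your joint estimates held.

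There are, however, two genuine gaps. The first you acknowledge yourself: the pointwise tail bound $\mathbb{P}(|\theta^i(z)|\geq M\mid Y_{i-1},Y_i)\leq C\varphi(\sqrt{T}(z-Y_{i-1}))/M$ is asserted, not proved, and nothing in the present paper supplies it (Lemma \ref{lemma:psmax1} controls only the integrated quantity $|\D_M|$). Such an estimate is true --- it can be extracted from Yor's explicit law of the winding, with $\varphi$ having a logarithmic singularity along the trajectory rather than being bounded, and it must be allowed to depend on both endpoints of the bridge --- but it is the technical core and cannot be black-boxed. The second gap you do not flag: your $p=2$ estimate $\mathbb{E}[|\D^{i,j}_M|^2\mid Y]\leq C(TM^2)^{-2}\psi(\cdot)$ is needed (via Cauchy--Schwarz) for every configuration in which the two pairs share an index, and in particular on the diagonal $\{i,j\}=\{k,l\}$. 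Writing $\mathbb{E}[|\D^{i,j}_M|^2\mid Y]=\iint q_i(z,w)q_j(z,w)\,\mathrm{d}z\,\mathrm{d}w$ with $q_i(z,w)=\mathbb{P}(|\theta^i(z)|\geq M,\ |\theta^i(w)|\geq M\mid Y)$, the one-point bound only gives $q_i(z,w)\leq\min_{u\in\{z,w\}}\mathbb{P}(|\theta^i(u)|\geq M)=O(M^{-1})$, hence $\mathbb{E}[|\D^{i,j}_M|^2]=O(T^{-2}M^{-2})$ rather than $O(T^{-2}M^{-4})$; conditional independence across pieces is of no help here because both events concern the \emph{same} piece at two \emph{different} points. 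In the regime where the lemma is used ($M\approx N^m$ with $m>\tfrac12$, $T\approx N^t$ with $t<\tfrac25$, so $M\gg T$), the $M^{-2}$ diagonal contribution alone already exceeds $CM^{-4+\epsilon}T^{1+\epsilon}$ and would break the Borel--Cantelli argument for $F_N$. You therefore need a genuine two-point decorrelation estimate $\mathbb{P}(|\theta^i(z)|\geq M,\ |\theta^i(w)|\geq M)\lesssim M^{-2+\epsilon}\phi(z,w)$ with $\phi$ integrable, a second winding estimate of the same depth as the first, which your sketch neither states nor derives.
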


\section{Proof of the theorem}

Let $f:\R^2\to \R$ be a bounded continuous function. Let $\omega_f$ be the modulus of continuity of $f$: for all $t\geq 0$,
\[\omega_f(t)=\sup \{ |f(z)-f(w)|: z,w\in \R^2, \|z-w\|\leq t\} \in [0,+\infty].\]
For all Borel subset $E$ of $\R^2$, we also set $f(E)=\int_E f(z)\d z$.

For $\alpha\in (0,\tfrac{1}{2})$, let $\|X\|_{\mathcal{C}^\alpha}$ denote the $\alpha$-H\"older norm of the Brownian motion:
\[
\|X\|_{\mathcal{C}^\alpha}=\sup_{0\leq s<t\leq 1}  \frac{\|X_t-X_s\|}{|t-s|^\alpha}.
\]

We have the following quantitative estimation.
\begin{lemma}
\label{le:main}
For all $t\in (0,\tfrac{2}{5})$ and $\alpha\in(0,\tfrac{1}{2})$, there exists $\eta>0$ such that
$\P$-almost surely, there
  exists a constant $C$ such that for all bounded continuous function $f:\R^2\to \R$ and all $N\geq 1$,
  \[
  \left|2\pi N f(\mathcal{D}_N)-  \int_0^1 f(X_u)\d u \right|
  \leq C \big( \omega_f(2 \|X\|_{\mathcal{C}^\alpha} N^{-\alpha t}  )
  +\|f\|_\infty N^{-\eta} \big).\]
\end{lemma}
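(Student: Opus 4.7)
The plan is to apply Lemma \ref{le:decompo} to the two positive measures $(\|f\|_\infty\pm f)(z)\,dz$ (to handle a possibly signed $f$), with $T=\lfloor N^t\rfloor$ and $M=\lfloor N^b\rfloor$ for some $b$ chosen in the interval $(\tfrac12+\tfrac{t}{4},\,1-t)$. This interval is non-empty precisely when $t<\tfrac25$, which is the hypothesis of the lemma. Combining the upper and lower bounds produced by Lemma \ref{le:decompo} yields, for suitable indices $N'_i=N+O(MT)$,
\[
\left|2\pi N f(\D_N)-2\pi N\sum_{i=1}^T f\bigl(\D^i_{N'_i}\bigr)\right|\leq 4\pi\|f\|_\infty\, N\hspace{-0.2cm}\sum_{1\leq i<j\leq T}\hspace{-0.2cm}|\D^{i,j}_M|+\|f\|_\infty\, O(N^{t+b-1}),
\]
where the residual $O(N^{t+b-1})$ comes from the discrepancy between $N^{\pm}=N\pm(T+M(T-1))$ and is controlled via Werner-type estimates on $|\D^i_{N^\pm}|$; it is negligible since $t+b<1$.

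Next I would control the double-sum error almost surely and uniformly in $N$. Lemma \ref{lemma:boundedSquare} combined with Chebyshev's inequality gives
\[
\mathbb{P}\!\left(2\pi N\sum_{1\leq i<j\leq T}|\D^{i,j}_M|>N^{-\gamma}\right)\leq C\, N^{2+2\gamma}M^{-4+\epsilon}T^{1+\epsilon},
\]
whose exponent $2+2\gamma-4b+t+O(\epsilon)$ is negative for small $\gamma,\epsilon$ exactly because $b>\tfrac12+\tfrac{t}{4}$. This probability is then summable along a polynomial subsequence $N_k=\lceil k^r\rceil$ (with $r$ large), and Borel--Cantelli combined with monotonicity of $\D^i_N$ and $\D^{i,j}_M$ in $N$ and $M$ (to compare the quantity at an arbitrary $N$ with its values at the bracketing subsequence points, where the relative change of $T$ and $M$ is uniformly bounded) upgrades the estimate to an almost sure bound $2\pi N\sum_{i<j}|\D^{i,j}_M|\leq C N^{-\eta}$, valid for all $N\geq 1$.

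For the main term I approximate $f$ on each $\D^i_{N'_i}$ by the constant $f(X_{i/T})$: since $\D^i_{N'_i}$ is contained in the convex hull of $\bar X^i$, its diameter is bounded by $\|X\|_{\mathcal{C}^\alpha}T^{-\alpha}=\|X\|_{\mathcal{C}^\alpha}N^{-\alpha t}$, yielding a pointwise error at most $\omega_f(2\|X\|_{\mathcal{C}^\alpha}N^{-\alpha t})|\D^i_{N'_i}|$. By Brownian scaling $|\D^i_{N'_i}|$ has the same law as $T^{-1}|\D_{N'_i}|$; Lemma \ref{lemma:psmax1} applied with a large moment $p$, a union bound over $i\leq T$, and Borel--Cantelli along a polynomial subsequence give, almost surely,
\[
\Bigl|\,2\pi N\,|\D^i_{N'_i}|-T^{-1}\,\Bigr|\leq C\, T^{-1}N^{-\delta}\qquad\text{uniformly in }i\text{ and for all }N.
\]
Summing, $2\pi N\sum_{i=1}^T f(X_{i/T})|\D^i_{N'_i}|=\frac{1}{T}\sum_{i=1}^T f(X_{i/T})+O(\|f\|_\infty N^{-\eta})$, and the Riemann sum differs from $\int_0^1 f(X_u)\,du$ by at most $\omega_f(\|X\|_{\mathcal{C}^\alpha}N^{-\alpha t})$ by $\alpha$-H\"older continuity of $X$, which is absorbed into $\omega_f(2\|X\|_{\mathcal{C}^\alpha}N^{-\alpha t})$.

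The main obstacle is the passage from ``with high probability at each $N$'' to ``almost surely for all $N$'' with a constant depending only on the Brownian path. Each invocation of Lemma \ref{lemma:psmax1} or Lemma \ref{lemma:boundedSquare} must be paired with a Borel--Cantelli argument along a suitable polynomial subsequence, the gaps handled by monotonicity of $\D_N$, $\D^i_N$ and $\D^{i,j}_M$; the condition $t<\tfrac25$ is exactly what allows the window for $b$ to be non-empty, hence for every error exponent to be uniformly negative.
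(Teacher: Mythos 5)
Your overall skeleton is the paper's: split $[0,1]$ into $T=\lfloor N^t\rfloor$ pieces, apply Lemma \ref{le:decompo} with $M=\lfloor N^b\rfloor$, $b\in(\tfrac12+\tfrac t4,1-t)$, control the cross term by Lemma \ref{lemma:boundedSquare} and the per-piece areas by scaling plus Lemma \ref{lemma:psmax1}, and get the $\omega_f$ term from the H\"older diameter bound and a Riemann sum. The exponent bookkeeping (negativity of $2+2\zeta-4b+t$, the role of $t<\tfrac25$) is also correct. The genuine gap is in the step you yourself single out as the main obstacle: the upgrade from the polynomial subsequence to all $N$. You propose to compare the quantities at an arbitrary $N$ with those at the bracketing subsequence points using ``monotonicity of $\D^i_N$ and $\D^{i,j}_M$ in $N$ and $M$''. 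But these sets are defined relative to the subdivision of $[0,1]$ into $T=\lfloor N^t\rfloor$ equal intervals, and $T$ changes (by an unbounded amount) between consecutive points of a sparse polynomial subsequence. For two different values of $T$ the loops $\bar X^i$ are entirely different, so there is no inclusion whatsoever between the corresponding sets $\D^i_\cdot$ or $\D^{i,j}_\cdot$; monotonicity only holds for a \emph{fixed} partition. So the a.s.\ bound you obtain at the points $N_k$ does not transfer to intermediate $N$ by the argument given. The problem is unavoidable for the double-sum term, because Lemma \ref{lemma:boundedSquare} only provides a second moment: a Borel--Cantelli argument over \emph{all} integers would need $4b>3+t$, which is compatible with $b<1-t$ only for $t<\tfrac15$ (and even grouping $N$ into blocks on which $T$ is constant only reaches $t<\tfrac13$), so your route cannot cover the whole stated range $t<\tfrac25$. (For the per-piece term the issue is avoidable, since Lemma \ref{lemma:psmax1} holds for arbitrarily large $p$ and one can afford a union bound over all $N$; but you invoke a subsequence there too.)

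The repair — and this is what the paper does — is to perform the interpolation at the level of the aggregate quantity rather than of the partition-dependent pieces: prove the full inequality only for $N$ in the sparse subsequence (each such $N$ carrying its own $T$, $M$, so no cross-partition comparison is ever needed), and then, for a general $N$, use that for $f\geq 0$ the map $N\mapsto f(\D_N)$ is non-increasing (since $\D_N$ is), so that with $\tilde N$ the largest subsequence element below $N$,
\[
2\pi N f(\D_N)-\int_0^1 f(X_u)\d u\ \leq\ \frac{N}{\tilde N}\Big(2\pi \tilde N f(\D_{\tilde N})-\int_0^1 f(X_u)\d u\Big)+\Big(\frac{N}{\tilde N}-1\Big)\|f\|_\infty,
\]
with the matching lower bound using the next subsequence element. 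Since consecutive elements of the subsequence have ratio tending to $1$ at a polynomial rate, this costs only an extra $\|f\|_\infty N^{-\eta'}$ and is independent of any partition, which is exactly what lets the subsequence be taken sparse enough for Borel--Cantelli while keeping $b$ anywhere in $(\tfrac12+\tfrac t4,1-t)$. With that replacement (and your $(\|f\|_\infty\pm f)\d z$ device for signed $f$, which is a fine alternative to splitting $f$ into positive and negative parts), your argument matches the paper's proof.
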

Let us explain why this lemma directly implies Theorem \ref{th:main}.
\begin{proof}[Proof of Theorem {\ref{th:main}} assuming Lemma {\ref{le:main}}]
Thanks to the Portmanteau theorem, is suffices to show that $\P$-almost surely, for any bounded Lipschitz continuous function $f$,
\[  \left|2\pi N f(\mathcal{D}_N)-  \int_0^1 f(X_u)\d u \right|\underset{N\to +\infty}{\longrightarrow} 0. \]
For such a function $f$, one has $\omega_f(t)\leq \|f\|_{\rm Lip}\, t $ and the result follows from Lemma \ref{le:main} applied for instance to $t=\tfrac{1}{5}$ and $\alpha=\tfrac{1}{4}$. \end{proof}

In order to prove Lemma \ref{le:main}, we introduce the following subset of $\mathbb{N}$, which depends on a positive real parameter $\gamma>1$:
\[\mathbb{N}^\gamma =\{ \lfloor K^{\gamma }\rfloor: K\in \mathbb{N} \}\setminus\{0\}.\]
Let us fix two positive real parameters $t$ and $m$ with $m+t<1$ and set, for all $N\geq 1$, $T=\lfloor N^t\rfloor$ and $M=\lfloor N^m \rfloor$. We advise the reader to think of $m$ as being larger than $\frac{1}{2}$, and of $t$ as a small number. Precise conditions can be found in the statement of Lemma \ref{le:casXi}.

We also set $N'= \max \{ n\in \mathbb{N}^{\gamma}: n\leq N-T-M(T-1)\}$, which is well defined when $N$ is large enough. The difference between $N$ and $N'$ is
$O(N^{1-1/\gamma}+ N^{m+t})$.

We also define the following events, which depend on $t$ and $m$, and also on other positive real parameters $s,\zeta,\delta$:
\begin{align*}
E_N&= \big\{  \forall i \in \{1,\dots, T\}, \ 
{N'}^\delta  \left|2\pi N'| \D^{i}_{N'}|    -\tfrac{1}{T}  \right|  \leq T^{-\frac{1}{2}+ \frac{s}{t}}  \big\},\\
F_N&= \Big\{  
 \hspace{-0cm}\sum_{1\leq i<j \leq T}\hspace{-0cm} |\D^{i,j}_M|\leq N^{-1-\zeta} \Big\},\\
 G_N&= \big\{  \forall i \in \{1,\dots, T\},\ 
 2\pi N |\D^{i}_{N'}|\leq \tfrac{2}{T} \big\}.
\end{align*}


The proof goes in three steps. In the first (Lemma \ref{le:events}), we show that with an appropriate choice of $\gamma$, almost surely, the events $E_N$, $F_N$ and $G_N$ are realised for all $N\in {\mathbb N}^\gamma$ large enough. In a second step (Lemma \ref{le:casXi}), we show that on this almost sure event, for every bounded continuous function, and for all $N\in {\mathbb N}^\gamma$, the conclusion of Lemma \ref{le:main} holds. In the third step, we show that the conclusion holds not only for $N\in {\mathbb N}^\gamma$, but for all $N\in \mathbb N$.

Let us collect in one place the assumptions that we make on the parameters that we introduced. These assumptions are organised in such a way that if enforced in the natural reading order, they are always satisfiable.

\begin{align}\tag{A}\label{eq:hyp}
0<\alpha<\tfrac{1}{2}\ ,\ 0<t<\tfrac{2}{5}\ ,
\begin{array}{c}
\tfrac{1}{2}+\tfrac{t}{4}<m< 1-t \ , \
 0<\zeta<2m-1-\tfrac{t}{2},\\[5pt]
0<s<\tfrac{1}{2}-\tfrac{t}{2} \ , \ \tfrac{t}{2}+s< \delta<\tfrac{1}{2},
\end{array} \
\gamma >\max \big(\tfrac{1}{2s}, \tfrac{1}{4m-t-2-2\zeta}\big).
\end{align}
From now on, we always assume that these assumptions are satisfied.

\begin{lemma}
  \label{le:events}
The event
  $\displaystyle \bigcup_{N_0\geq 1} \bigcap_{\substack{ N\in \mathbb{N}^{\gamma} \\ N\geq N_0 }} (E_N\cap F_N \cap G_N)$ has probability $1$.

\end{lemma}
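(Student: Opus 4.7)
The plan is to apply a Borel--Cantelli argument. It suffices to establish that each of the series
\[\sum_{N\in \mathbb{N}^\gamma}\P(E_N^c),\qquad \sum_{N\in \mathbb{N}^\gamma}\P(F_N^c),\qquad \sum_{N\in \mathbb{N}^\gamma}\P(G_N^c)\]
is finite, since summability along $\mathbb{N}^\gamma$ then gives the stated almost sure event. Because $\mathbb{N}^\gamma=\{\lfloor K^\gamma\rfloor\}$ is sparse, summing along it amounts, after reindexing $N=\lfloor K^\gamma\rfloor$, to a sum over $K\in \mathbb{N}$, so the required polynomial decay in $N$ is quite mild.

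For $E_N$ and $G_N$, the essential tool is Brownian scaling. For each $i$, the loop $\bar{X}^i$ is the image under the affine map $z\mapsto X_{(i-1)/T}+T^{-1/2}z$ of a loop $\bar{Y}^{(i)}$ whose law is that of $\bar{X}$. Since winding numbers are preserved under affine bijections, $|\D^i_N|$ equals $T^{-1}|\D_N(\bar{Y}^{(i)})|$ in distribution, and multiplication by $T$ turns the deviation $2\pi N'|\D^i_{N'}|-\tfrac{1}{T}$ into $2\pi N'|\D_{N'}(\bar{Y}^{(i)})|-1$. Applying Lemma \ref{lemma:psmax1} to $\bar{Y}^{(i)}$ with a large integer $p$ gives
\[\P\Bigl({N'}^\delta\bigl|2\pi N'|\D^i_{N'}|-\tfrac{1}{T}\bigr|> T^{-\frac12+\frac{s}{t}}\Bigr)=\P\Bigl({N'}^\delta\bigl|2\pi N'|\D_{N'}(\bar{Y}^{(i)})|-1\bigr|> T^{\frac12+\frac{s}{t}}\Bigr)\leq C\,T^{-p(\frac12+\frac{s}{t})},\]
and a union bound over $i\in \{1,\dots,T\}$ yields $\P(E_N^c)\leq C T^{1-p(\frac12+\frac{s}{t})}$, summable along $\mathbb{N}^\gamma$ for $p$ large. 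For $G_N$, using $N'/N\to 1$, one has $2N'/N>\tfrac{3}{2}$ eventually, so
\[\P\bigl(2\pi N|\D^i_{N'}|>\tfrac{2}{T}\bigr)=\P\bigl(2\pi N|\D_{N'}(\bar{Y}^{(i)})|>2\bigr)\leq \P\bigl(|2\pi N'|\D_{N'}(\bar{Y}^{(i)})|-1|>\tfrac12\bigr)\leq C\,{N'}^{-\delta p},\]
again by Lemma \ref{lemma:psmax1}; a union bound and $p$ large give summability.

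For $F_N$, Markov's inequality combined with Lemma \ref{lemma:boundedSquare} (with some small $\epsilon>0$) gives
\[\P(F_N^c)\leq N^{2(1+\zeta)}\,\mathbb{E}\!\left[\Bigl(\sum_{1\leq i<j\leq T}|\D^{i,j}_M|\Bigr)^{2}\right]\leq C\,N^{2+2\zeta-m(4-\epsilon)+t(1+\epsilon)}.\]
The constraint $\zeta<2m-1-\tfrac{t}{2}$ from \eqref{eq:hyp} makes the base exponent $2+2\zeta-4m+t$ strictly negative, and the condition $\gamma>1/(4m-t-2-2\zeta)$ guarantees that after reindexing by $N=\lfloor K^\gamma\rfloor$, the resulting series in $K$ has exponent strictly less than $-1$, provided $\epsilon$ is small enough. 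Applying Borel--Cantelli to the three summable series then proves the lemma.

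The main obstacle here is not conceptual but arithmetic: the parameters in \eqref{eq:hyp} are essentially tuned so that the $F_N$ estimate is on the edge of summability, and the constraint on $\gamma$ is dictated precisely by this estimate. The $E_N$ and $G_N$ bounds, by contrast, can be made summable for any admissible $\gamma$ by choosing the moment $p$ in Lemma \ref{lemma:psmax1} sufficiently large, so they impose no additional restriction.
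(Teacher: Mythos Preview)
Your proof is correct and follows essentially the same Borel--Cantelli strategy as the paper: Brownian scaling plus Lemma~\ref{lemma:psmax1} for $E_N$, and Markov's inequality plus Lemma~\ref{lemma:boundedSquare} for $F_N$. The only notable difference is in the treatment of $G_N$: the paper observes that, under the hypothesis $\delta>\tfrac{t}{2}+s$ from \eqref{eq:hyp}, one has $E_N\subset G_N$ for all sufficiently large $N$ (since on $E_N$ the deviation $2\pi N'|\D^i_{N'}|-\tfrac{1}{T}$ is $o(1/T)$, and $N/N'\to 1$), so no separate summability argument is needed. Your direct Borel--Cantelli bound for $G_N$ works equally well; the paper's route is just a bit shorter. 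Your remark that the constraint $\gamma>\tfrac{1}{2s}$ in \eqref{eq:hyp} is not actually needed for this lemma---because taking $p$ large in Lemma~\ref{lemma:psmax1} makes the $E_N$ series summable for any $\gamma>1$---is correct; the paper, by contrast, fixes $p=2$ and thereby uses that constraint.
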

\begin{proof}
  The scaling properties of the Brownian motion imply that $|\mathcal{D}^i_{N'}|$ is equal in distribution to $T^{-1}|\mathcal{D}_{N'}|$. Thus,
  \[ 1- \mathbb{P}(E_N)\leq T \mathbb{P}( {N'}^\delta \big|2 \pi N'|\mathcal{D}_{N'}|-1\big|\geq T^{\frac{1}{2}+\frac{s}{t}}).\]
  Using Lemma \ref{lemma:psmax1} with $p=2$ gives
  \[ 1- \mathbb{P}(E_N)\leq C T^{-\frac{2s}{t}},\]
  and for $N$ large enough, this quantity is smaller than $2 C N^{-2s}$.
    In particular,
    \[  \sum_{N\in \mathbb{N}^\gamma} \big(1-\P(E_N)\big)\leq 2 C \sum_{K=1}^{+\infty} K^{-2s \gamma}.
  \]

  Besides, by Markov inequality,
  \[ 1- \mathbb{P}(F_N)\leq N^{2+2\zeta}\,  \mathbb{E}\Big[ \Big(\sum_{1\leq i<j \leq T} |\mathcal{D}^{i,j}_M| \Big)^2 \Big].\]
  By Lemma \ref{lemma:boundedSquare}, for any $\epsilon>0$, there exists $C$ such that for all $N$,
  \[ 1- \mathbb{P}(F_N)\leq C N^{-4m+t+2+2\zeta+\epsilon  }.\]
  In particular,
\[  \sum_{N\in \mathbb{N}^\gamma} \big(1-\P(F_N)\big)\leq C \sum_{K=1}^{+\infty} K^{\gamma(-4m+t+2+2\zeta+\epsilon)  }.
  \]
  We assumed that $\gamma>\frac{1}{4m-t-2-2\zeta}$, so that there exists $\epsilon>0$ such that
  $\gamma>\frac{1}{4m-t-2-2\zeta-\epsilon}$. Since we also assumed that $\gamma>\frac{1}{2s}$,
the series
  \[ \sum_{K=1}^{+\infty} K^{-\gamma(4m-t-2-2\zeta-\epsilon) } \qquad \mbox{and}\qquad \sum_{K=1}^{+\infty} K^{-\gamma(2s) }\]
  are both convergent.

  Using Borel--Cantelli lemma, we conclude the proof, but for the presence of $G_N$. However, using the fact that $N'$ is not larger than $N$ and equivalent to $N$ as $N$ tends to infinity, and the inequality $T\leq N^t$, one verifies that if $t+2s<2\delta$, then for $N$ large enough, the inclusion $E_N\subset G_N$ holds. Hence, the proof is complete.
\end{proof}


We now turn to the second step of the proof.

\begin{lemma}
\label{le:casXi}
Almost surely, there
  exists a constant $C$ such that for all $N\in \mathbb{N}^\gamma $ and all bounded continuous function $f:\R^2\to \R$,
  \[
  \left|2\pi N f(\mathcal{D}_N)\! - \!\! \int_0^1\! \! f(X_u)\d u \right|
  \leq\! C \Big( \omega_f\big(\|X\|_{\mathcal{C}^\alpha} T^{-\alpha}\big)
  +\|f\|_\infty ( N^{-1+m+t} +N^{-\frac{1}{\gamma}+1}+ N^{-\delta+\frac{t}{2}+s}+N^{-\zeta})\Big).\]
\end{lemma}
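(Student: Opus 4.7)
The plan is to interpolate between $2\pi N f(\D_N)$ and $\int_0^1 f(X_u)\d u$ through two intermediate quantities: the localized sum $2\pi N\sum_{i=1}^T f(\D^i_{N'})$, which replaces the full loop by its $T$ sub-loops, and the frozen version $\sum_{i=1}^T f(X_{i/T})\cdot 2\pi N|\D^i_{N'}|$, which replaces $f$ on each piece by its value at a reference point on the trajectory. The three resulting pairwise differences, together with the final Riemann sum error $\big|T^{-1}\sum_i f(X_{i/T})-\int_0^1 f(X_u)\d u\big|$, will be matched one by one with the error terms appearing in the statement.

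For the first difference, decompose $f=f^+-f^-$ and apply Lemma~\ref{le:decompo} separately to $f^{\pm}\d z$, using the indices $N^{\pm}:=N\pm(T+M(T-1))$. The inclusions $\D^i_{N^+}\subset \D^i_{N^-}\subset \D^i_{N'}$ (the latter holding since $N'\leq N^-$ by construction) combine to give
\[
\Big|f(\D_N)-\sum_{i=1}^T f(\D^i_{N'})\Big|\leq 2\|f\|_\infty\Big(\sum_{1\leq i<j\leq T}|\D^{i,j}_M|+\sum_{i=1}^T\big(|\D^i_{N'}|-|\D^i_{N^+}|\big)\Big).
\]
On $F_N$, the first sum times $2\pi N$ contributes $N^{-\zeta}$. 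For the annular mass, $E_N$ pins $|\D^i_{N'}|$ near $(2\pi N'T)^{-1}$, while a companion Werner-type concentration for $|\D^i_{N^+}|$ near $(2\pi N^+T)^{-1}$ (obtained from Lemma~\ref{lemma:psmax1} by scaling) shows that the difference is of order $(N^+-N')/(N^2 T)$; since $N^+-N'=O(N^{m+t}+N^{1-1/\gamma})$, summation over $i$ and multiplication by $N$ produce the $N^{-1+m+t}$ and $N^{-\frac{1}{\gamma}+1}$ contributions.

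For the second difference, each $\D^i_{N'}$ lies in the bounded components of $\R^2\setminus\Range(\bar X^i)$, hence in the convex hull of that range, a set of diameter at most $\|X\|_{\mathcal C^\alpha}T^{-\alpha}$. Therefore $|f(z)-f(X_{i/T})|\leq\omega_f(\|X\|_{\mathcal C^\alpha}T^{-\alpha})$ for every $z\in\D^i_{N'}$, and the difference is dominated by $\omega_f(\|X\|_{\mathcal C^\alpha}T^{-\alpha})\cdot 2\pi N\sum_i|\D^i_{N'}|$, which is at most $2\omega_f(\|X\|_{\mathcal C^\alpha}T^{-\alpha})$ on $G_N$.

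For the third difference, the decomposition
\[
2\pi N|\D^i_{N'}|-\tfrac{1}{T}=\tfrac{N}{N'}\bigl(2\pi N'|\D^i_{N'}|-\tfrac{1}{T}\bigr)+\tfrac{1}{T}\bigl(\tfrac{N}{N'}-1\bigr),
\]
together with $E_N$ for the first piece and the estimate $N/N'-1=O(N^{-1/\gamma}+N^{m+t-1})$ for the second, yields (after multiplying by $f(X_{i/T})$ and summing) the $N^{-\delta+\frac{t}{2}+s}$, $N^{-1+m+t}$ and $N^{-\frac{1}{\gamma}+1}$ contributions; the final Riemann sum error is absorbed in $\omega_f(\|X\|_{\mathcal C^\alpha}T^{-\alpha})$ by H\"older continuity of $X$. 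The main difficulty is the annular estimate of Step~1: bounding $\sum_i(|\D^i_{N'}|-|\D^i_{N^+}|)$ requires two-sided concentration for $|\D^i_{N^+}|$, which is not directly packaged in $E_N,F_N,G_N$ and must be supplied by a further invocation of Lemma~\ref{lemma:psmax1}.
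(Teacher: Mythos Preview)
Your proposal is correct and follows the same skeleton as the paper's proof: apply Lemma~\ref{le:decompo} to pass from $\D_N$ to the localized sets $\D^i_{N'}$, freeze $f$ at $X_{i/T}$ using the modulus of continuity, and reduce the remaining weight $2\pi N|\D^i_{N'}|$ to $T^{-1}$ via the events $E_N,F_N,G_N$ from Lemma~\ref{le:events}.

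The one organizational difference is worth noting. You bound the absolute value $|f(\D_N)-\sum_i f(\D^i_{N'})|$ in one stroke, which forces you to control the annular mass $\sum_i(|\D^i_{N'}|-|\D^i_{N^+}|)$; you correctly observe that this requires a concentration estimate for $|\D^i_{N^+}|$ not already packaged in $E_N$, and that Lemma~\ref{lemma:psmax1} together with a second Borel--Cantelli pass supplies it. The paper instead argues \emph{one-sidedly}: for the upper bound it uses $N'\leq N-T-M(T-1)$ and monotonicity to pass directly from $\D^i_{N^-}$ to $\D^i_{N'}$, never comparing two indices; for the lower bound it repeats the argument with $N'$ redefined relative to $N+T+M(T-1)$. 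This avoids the annular estimate entirely---each half of the proof needs concentration at only one index---though it still implicitly requires the analogue of $E_N,G_N$ for the second choice of $N'$, which is covered by the same Borel--Cantelli argument. So the two routes need exactly the same ingredients; the paper's ordering just keeps the bookkeeping lighter by never subtracting two Werner-type estimates.
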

\begin{proof}
We first assume that $f$ is non-negative.
Replacing $C$ if necessary by a larger constant, it suffices to show the inequality for $N\geq N_0$, for a possibly random $N_0$ which does not depend on~$f$. Using Lemma \ref{le:events}, we can thus assume that the event $E_N\cap F_N\cap G_{N}$ holds.

Using Lemma \ref{le:decompo}, the assumption that $f$ is non-negative and the fact that the sequence $(\D^i_N)_{N\geq 1}$ is non-increasing, we have
\begin{align}
  N f({\D}_N)
  &\leq \sum_{i=1}^T N f({\D}^{i}_{N-T-M(T-1) }) +\sum_{1\leq i<j\leq T} N f({\D}^{i,j}_M)\nonumber\\
&\leq\sum_{i=1}^T N f({\D}^{i}_{N'}) +\sum_{1\leq i<j\leq T} N f({\D}^{i,j}_M).
  \label{eq:decompoSmall}
\end{align}
Besides, ${\D}^{i}_{N'}$ is contained in the convex hull of the trajectory of $X$ between the times $\tfrac{i}{T}$ and $\tfrac{i+1}{T}$, hence in the ball of center $X_{\frac{i}{T}}$ and radius $\|X\|_{\mathcal{C}^\alpha} T^{-\alpha}$, so that
\[
N f({\D}^{i}_{N'})
\leq N |{\D}^{i}_{N'}| f(X_{\frac{i}{T} } )+ N |{\D}^{i}_{N'}|\omega_f(\|X\|_{\mathcal{C}^\alpha} T^{-\alpha} ).
\]
We replace in \eqref{eq:decompoSmall} and force the apparition of a Riemann sum by decomposing $N |{\D}^{i}_{N'}|$ into
\[\frac{1}{2\pi T}+ \frac{N-N' }{2\pi T N' } +
 N \left(
|\D^{i}_{N'}|-\tfrac{1}{2\pi T N'}\right).\]
We obtain
\begin{align*}
\sum_{i=1}^T N f({\D}^{i}_{N'})
& \leq
\sum_{i=1}^T \tfrac{1}{2\pi T} f(X_{\frac{i}{T} } )+
\sum_{i=1}^T  \tfrac{N-N'}{2\pi T N' }f(X_{\frac{i}{T} } )+
N \sum_{i=1}^T \left(
|\D^{i}_{N'}|-\tfrac{1}{2\pi T N' }\right) f(X_{\frac{i}{T} } ) \\
&\hspace{8cm} +N
\sum_{i=1}^T  |{\D}^{i}_{N'}|\omega_f(\|X\|_{\mathcal{C}^\alpha} T^{-\alpha} ).
\end{align*}
Comparing the Riemann sum with the integral and $f$ to its upper bound, we turn this inequality into
\begin{align*}
2\pi\sum_{i=1}^T N f({\D}^{i}_{N'})
&\leq\int_0^1 f(X_u)\d u +\omega_f( \|X\|_{\mathcal{C}^\alpha}T^{-\alpha} )+\|f\|_{\infty} \tfrac{N-N'}{ N'}+
\|f\|_{\infty} N \sum_{i=1}^T \left(
2\pi|\D^{i}_{N'}|-\tfrac{1}{T N' }\right) \\
&\hspace{7.8cm}+2\pi\omega_f(\|X\|_{\mathcal{C}^\alpha} T^{-\alpha} )N
\sum_{i=1}^T  |{\D}^{i}_{N'}|.
\end{align*}
Our next goal is to bound the last three terms of the right-hand side. Let us discuss the first, then the third and finally the second.

For the first term, it follows from the definition of $N'$ and by elementary arguments that for $N$ large enough, indeed larger than a certain $N_1$ that does not depend on $f$,
\[\frac{N-N'}{N'}<2 (N^{m+t-1}+\gamma N^{-\frac{1}{\gamma}+1}).\]


For the third term, 
since the event $G_{N}$ holds, we have
\[ \sum_{i=1}^T |{\D}^{i}_{N'}| \leq T \max_{i\in \{1,\dots, T\} } |\D^{i}_{N'}|\leq
\frac{1}{\pi N} 
.\]

Finally, since the event $E_{N}$ holds, and for $N$ large enough,
\[\sum_{i=1}^T \left(2\pi
|\D^{i}_{N'}|-\tfrac{1}{ T N' }\right)\leq N'^{-1-\delta} T^{\frac{1}{2}+\frac{s}{t}}\leq 2 N^{-1-\delta+\frac{t}{2}+s }.\]
Here the second inequality holds for $N$ larger than a certain $N_2$ which does not depend on $f$.

We end up with
\begin{align}
2\pi \sum_{i=1}^T N f({\D}^{i}_{N'})-  \!\int_0^1\! f(X_u)\d u &
\leq 3 \omega_f(\|X\|_{\mathcal{C}^\alpha} T^{-\alpha}  )
+2 \|f\|_\infty ( N^{m+t-1} \!+\!\gamma N^{-\frac{1}{\gamma}+1 }\!+\!  N^{-\delta+\frac{t}{2}+s}).
\label{eq:bound2}
\end{align}

We now turn to the second term of the right-hand side of \eqref{eq:decompoSmall}. Since $F_N$ holds,
\begin{equation}
\label{eq:deux}
N\sum_{1\leq i<j\leq T} f(\D^{i,j}_M)\leq  N \|f\|_\infty  \sum_{1\leq i<j\leq T} |\D^{i,j}_M|\leq \|f\|_\infty N^{-\zeta}.
\end{equation}
Using \eqref{eq:decompoSmall}, \eqref{eq:bound2} and \eqref{eq:deux}, we get
that almost surely, for $N\geq \max(N_0,N_1,N_2)$,
\begin{equation}
\label{eq:bornesup}
2\pi N f(\mathcal{D}_N)- \! \int_0^1\! f(X_u)\d u
\leq 3 \omega_f(\|X\|_{\mathcal{C}^\alpha} T^{-\alpha}  )
+2 \|f\|_\infty ( N^{m+t-1} +\gamma N^{-\frac{1}{\gamma}+1}+ N^{-\delta+\frac{t}{2}+s}+N^{-\zeta}).
\end{equation}
To obtain this upper bound, we used the second inequality of Lemma \ref{le:decompo}, and the definition of $N'$ which was suggested by the term $N-T-M(T-1)$ that appears in it. A repetition of the exact same arguments, with the difference that $N'$ is now defined as the largest element of $\mathbb N^\gamma$ smaller than $N+T+M(T-1)$, and using the first inequality of Lemma \ref{le:decompo} instead of the second, yields the corresponding lower bound, saying that the left-hand side of \eqref{eq:bornesup} is larger than the opposite of the right-hand side of \eqref{eq:bornesup}. 


This concludes the proof when $f$ is non-negative. To remove this assumption, it suffices to decompose $f$ into the sum of its positive and negative parts.
%
%
%
%
%
%
%
%
%
%
%
%
\end{proof}

We now extend Lemma \ref{le:casXi} from $N\in \mathbb{N}^\gamma$ to $N\in \mathbb{N}^*$, in order to obtain Lemma \ref{le:main}.

\begin{proof}[Proof of Lemma {\ref{le:main}}]
The reals $t$ and $\alpha$ being given, choose positive real numbers $s,\zeta,m,\delta, \gamma$ which satisfy the assumptions \eqref{eq:hyp}.
Set $\eta=\min(1- m-t,\frac{1}{\gamma}-1, \delta-\frac{t}{2}-s, \zeta)>0$.

Let us first assume $f$ is non-negative. Set $\tilde{N}=\max \{ n\in \mathbb{N}^\gamma: n\leq N\}$, the largest integer smaller than $N$ in $\mathbb{N}^\gamma$.

Since the sequence $(f(\mathcal{D}_N))_{N\geq 1}$ is non-increasing, we have
\begin{align*}
2\pi N f(\mathcal{D}_N) -\int_0^1 f(X_u) \d u
&\leq  2\pi N  f(\mathcal{D}_{\tilde{N}}) -\int_0^1 f(X_u) \d u\\
&=\frac{N}{\tilde{N}} \Big(2\pi \tilde{N}  f(\mathcal{D}_{\tilde{N}}) -\int_0^1 f(X_u) \d u   \Big)+ \Big(\frac{N}{\tilde{N}}-1\Big) \int_0^1 f(X_u) \d u.
%
\end{align*}
The first term is taken care of by Lemma \ref{le:casXi} and the fact that $N\leq 2\tilde N$ for $N$ large enough. The second term is bounded above, for $N$ sufficiently large, by $2\gamma \|f\|_\infty N^{-\frac{1}{\gamma}+1}$. Altogether, we find the upper bound
\[2\pi N f(\mathcal{D}_N) -\int_0^1 f(X_u) \d u \leq  C \big(\omega_f(\|X\|_{\mathcal{C}^\alpha} T^{-\alpha}) +\|f\|_\infty N^{-\eta} \big)\]
for some constant $C$. The corresponding lower bound is obtained by the same argument with $\tilde N$ defined as $\min\{n\in \mathbb N^\gamma : n\geq N\}$.
This concludes the proof when $f$ is non-negative. For the general case, we simply decompose $f$ into its positive and negative parts. This concludes the proof of Lemma~\ref{le:main}, and also the proof of Theorem~\ref{th:main}.
\end{proof}

\section{Further perspectives}

It is possible that a similar result also holds when we consider the joint windings of independent Brownian motions. To be more specific, for two independent planar Brownian motions $X,X'$, we can define their intersection measure $\ell$, which is carried by the plane (see \cite{rosen}).

One possible way to approximate the mass of this measure is to look at the Lebesgue measure of the intersection of Wiener sausages with small radius $\epsilon$ around $X$ and $X'$. In \cite{leGall} (and also in \cite{leGall2}), it is shown that $\ell(\mathbb{R}^2)$ can be obtained as the properly normalized limit of these measures as $\epsilon\to 0$.

For two independent planar Brownian motions $X,X'$, define
\[
\mathcal{D}^{(2)}_{N}=\{ z\in \R^2: \theta_X(z)\geq N, \theta_{X'}(z)\geq N\}.
\]
\begin{conjecture}
There exists a constant $C$ which depends only $\|X_0-X'_0\|$ and such that
 $CN^2 |\mathcal{D}^{(2)}_{N}|$ converges, as $N\to \infty$, towards $\ell(\mathbb{R}^2)$. The converges holds both in $L^p$ for any $p\in[1,+\infty)$ and almost surely.

Besides, almost surely, the measure $CN^2\mathbbm{1}_{\mathcal{D}^{(2)}_{N}}\d z$ converges weakly towards $\ell$.
\end{conjecture}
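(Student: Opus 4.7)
The plan is to reproduce the three-step structure of the proof of Theorem \ref{th:main} in the two-motion setting. The required ingredients are: (i) a quantitative first-moment estimate playing the role of Lemma \ref{lemma:psmax1} and saying that $CN^2 |\mathcal{D}^{(2)}_N|$ concentrates at a polynomial rate around $\ell(\R^2)$; (ii) a decomposition lemma analogous to Lemma \ref{le:decompo}, cutting both $X$ and $X'$ into $T$ pieces and bounding $|\mathcal{D}^{(2)}_N|$ by sums of $|\mathcal{D}^{i,j}_N|$ with $\mathcal{D}^{i,j}_N=\{z:\theta_X^i(z)\geq N,\ \theta_{X'}^j(z)\geq N\}$, up to error terms involving further overlaps; and (iii) a second-moment bound on these error terms analogous to Lemma \ref{lemma:boundedSquare}. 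Once all three are in hand, the scheme of Lemmas \ref{le:events}--\ref{le:casXi} should carry over almost verbatim, yielding both the almost sure convergence of $CN^2|\mathcal{D}^{(2)}_N|$ to $\ell(\R^2)$ and the weak convergence of the measures $CN^2\mathbbm{1}_{\mathcal{D}^{(2)}_N}\d z$ towards $\ell$; the $L^p$ convergence for $p\in[1,+\infty)$ would then follow by domination.

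Ingredient (i) is the main conceptual input. A scaling and independence heuristic strongly suggests $C=4\pi^2$: near a generic transversal intersection point $z$ of the two trajectories, the windings $\theta_X(z)$ and $\theta_{X'}(z)$ are asymptotically independent and each has the classical tail $\mathbb{P}(|\theta_X(z)|\geq N)\sim\tfrac{1}{2\pi N}$ coming from small It\^o excursions around $z$, so that
\[
\mathbb{E}\bigl[|\mathcal{D}^{(2)}_N|\bigr]\sim\frac{1}{(2\pi N)^2}\,\mathbb{E}\bigl[\ell(\R^2)\bigr].
\]
Turning this heuristic into a quantitative $L^2$ statement could proceed by combining Yor's explicit law for the winding of a planar Brownian motion around a single point, Fubini in $z$, and the Le Gall identification of $\ell(\R^2)$ recalled in \cite{leGall,leGall2} as the renormalised Lebesgue measure of intersections of Wiener sausages; the dependence on $\|X_0-X'_0\|$ enters only through the law of $\ell$.

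Ingredient (iii) is where I expect the main technical obstacle to lie. When the pairs $(i,j)$ and $(i',j')$ contributing to $|\mathcal{D}^{i,j}_M\cap\mathcal{D}^{i',j'}_M|$ have all four indices distinct, independence decouples the question into two independent events, each controlled by Lemma \ref{lemma:boundedSquare} applied to a single motion. The delicate case is when indices repeat on one side only (say $i=i'$ but $j\neq j'$): one is then asking a single piece of $X$ to wind many times around two distinct points, each of which is also wound by a different piece of $X'$. This configuration genuinely couples $X$ and $X'$ in a way that cannot be reduced to the results of \cite{LAWA}, and it is here that a new estimate seems necessary. Granted such a bound, ingredient (ii) follows by applying Lemma \ref{le:decompo} to each trajectory separately and intersecting the resulting inequalities, and the final passage from a polynomial subsequence $\mathbb{N}^\gamma$ to all $N$ then goes through exactly as in the proof of Lemma \ref{le:main}.
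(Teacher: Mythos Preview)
The statement you are attempting to prove is labelled \emph{Conjecture} in the paper, and the paper contains no proof of it whatsoever. It appears in the ``Further perspectives'' section with the explicit caveat that the authors ``cannot exclude that some logarithmic corrections should be added,'' so even the exponent $2$ on $N$ and the precise form of the normalising constant are not asserted with certainty. There is therefore nothing in the paper to compare your proposal against.

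As to the proposal itself: it is a reasonable outline of how one might try to adapt the one-motion argument, but it is not a proof, and you acknowledge this yourself. Ingredient (i) is stated only as a heuristic; turning the asymptotic independence of $\theta_X(z)$ and $\theta_{X'}(z)$ near an intersection point into a quantitative $L^2$ concentration of $CN^2|\mathcal{D}^{(2)}_N|$ around $\ell(\R^2)$ is a substantial analytic step that you do not carry out. Ingredient (iii) you explicitly identify as requiring ``a new estimate'' in the case of repeated indices, and you do not supply one. Without both of these, the Borel--Cantelli scheme of Lemmas~\ref{le:events}--\ref{le:casXi} cannot even be launched, because the events $E_N$ and $F_N$ in the two-motion setting have no established probability bounds. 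The claimed $L^p$ convergence ``by domination'' is also unjustified: you would need a uniform $L^p$ bound on $N^2|\mathcal{D}^{(2)}_N|$, which is not obviously available. In short, your document is a plausible research plan, not a proof, and the paper makes no claim that such a proof presently exists.
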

For such a result to hold, it is necessary that the exponent of $N$ is equal to $2$. Nonetheless, we cannot exclude that some logarithmic corrections should be added.



\bibliographystyle{plain}
\bibliography{bib2.bib}

\end{document}